\documentclass[a4paper]{article}
\usepackage{fullpage}

\usepackage[usenames,dvipsnames]{xcolor}
\usepackage[pdftex]{hyperref}

\newcommand{\Rset}{\mathbb{R}}

\newcommand{\comment}[1]{}

\newtheorem{theo}{Theorem}
\newtheorem{defi}[theo]{Definition}

\newtheorem{coro}[theo]{Corollary}

\newtheorem{prop}[theo]{Proposition}
\DeclareSymbolFont{stmry}{U}{stmry}{m}{n}
\SetSymbolFont{stmry}{bold}{U}{stmry}{b}{n}
\DeclareMathDelimiter\llbracket{\mathopen}{stmry}{"4A}{stmry}{"71}
\DeclareMathDelimiter\rrbracket{\mathclose}{stmry}{"4B}{stmry}{"79}
\newenvironment{proof}{\noindent{\bf Proof.~}}
{{\mbox{}\hfill {\small \fbox{}}\\}}

\def\supp{{\mathrm {\rm supp}}}
\def\uu{\underline{u}}
\def\uuu{\underline{\underline{u}}}

\definecolor{vert}{rgb}{0.06, 0.7, 0.6}
\definecolor{mauve}{rgb}{0.7, 0.2, 0.99}

%\IEEEoverridecommandlockouts                              % This command is only needed if 
                                                          % you want to use the \thanks command

%\overrideIEEEmargins                                      % Needed to meet printer requirements.

% See the \addtolength command later in the file to balance the column lengths
% on the last page of the document

% The following packages can be found on http:\\www.ctan.org
\usepackage{graphics} % for pdf, bitmapped graphics files
\usepackage{epsfig} % for postscript graphics files
\usepackage{mathptmx} % assumes new font selection scheme installed
\usepackage{times} % assumes new font selection scheme installed
\usepackage{amsmath} % assumes amsmath package installed
\usepackage{amssymb}  % assumes amsmath package installed

%\title{\LARGE \bf
\title{Establishing Traveling Wave in Bistable Reaction-Diffusion System by Feedback}
%: application to a problem of biological control of mosquito-borne diseases}
% {\em Wolbachia} invasion}

\author{Pierre-Alexandre Bliman and Nicolas Vauchelet% <-this % stops a space
\thanks{This work was supported by Inria, France and CAPES, Brazil (processo 99999.007551/2015-00), in the framework of the STIC AmSud project MOSTICAW.
N.V.\ acknowledges support from the Emergence project from Mairie de Paris, {\it Analysis and simulation of optimal shapes - application to lifesciences}.}% <-this % stops a space
\thanks{Pierre-Alexandre Bliman is with Sorbonne Universit\'es, Inria, UPMC Univ. Paris 06, Lab.\ J.L.\ Lions UMR CNRS 7598, Paris, France and Escola de Matem\'atica Aplicada, Funda\c c\~ao Getulio Vargas, Rio de Janeiro - RJ, Brazil
        {\tt\small pierre-alexandre.bliman@inria.fr}}%
\thanks{Nicolas Vauchelet is with LAGA - UMR 7539,
Institut Galil\'ee,
Universit\'e Paris 13,
99, avenue Jean-Baptiste Cl\'ement,
93430 Villetaneuse, France
        {\tt\small vauchelet@math.univ-paris13.fr}}%
}

\begin{document}

\maketitle
%\thispagestyle{empty}
%\pagestyle{empty}

%%%%%%%%%%%%%%%%%%%%%%%%%%%%%%%%%%%%%%%%%%%%%%%%%%%%%%%%%%%%%%%%%%%%%%%%%%%%%%%%
\begin{abstract}

Several stains of the intracellular parasitic bacterium {\em Wolbachia} limit severely the competence of the mosquitoes {\em Aedes aegypti} as a vector of dengue fever and possibly other arboviroses.
For this reason, the release of mosquitoes infected by this bacterium in natural populations is presently considered a promising tool in the control of these diseases.
Following works by M.\ Turelli \cite{Barton:2011aa} and subsequently M.\ Strugarek et al.\ \cite{Strugarek:2016ab,Strugarek:2016aa}, we consider a simple scalar reaction-diffusion model describing the evolution of the proportion of infected mosquitoes, sufficient to reveal the bistable nature of the {\em Wolbachia} dynamics.
A simple distributed feedback law is proposed, whose application on a compact domain during finite time is shown to be sufficient to invade the whole space.
The corresponding stabilization result is established for any space dimension.
\end{abstract}

{\bf Keywords:} Biological systems; Distributed parameter systems; Distributed control

\section{Introduction}
\label{se1}

%\subsection{Control of dengue disease by introduction of the bacterium {\em Wolbachia} in vector populations}
\label{se11}

%Arboviruses are viruses transmitted to humans by arthropods, such as mosquitoes, that put at risk considerable portions of the human population and infect millions of people yearly.
%The control of diseases such as y
Dengue, chikungunya or zika fever put at risk considerable portions of the human population.
In absence of vaccine or curative treatment, acting on the population of mosquitoes {\em Aedes aegypti} that are their vectors is essentially the only feasible control method.
Application of insecticides and mechanical remotion of breeding sites are the most popular methods.
However, implementing the latter necessitates massive public campaigns with mixed efficiency, while beyond their negative impact on the environment and other species, intensive use of insecticides has induced gradual increase of the mosquito resistance and correlative efficiency decrease \cite{MacieldeFreitas2014,montella2007insecticide}.
Therefore alternative methods have been proposed and implemented.
Among them the release of transgenic or sterile mosquitoes has been tested \cite{Alphey2010,Alphey2014}.
The latter, based on local eradication of  the vector, suffers from intrinsic lack of robustness against subsequent 
reinvasions.

The release of {\em Aedes aegypti} mosquitoes deliberately infected in laboratory by the bacterium {\em Wolbachia} has been proposed recently as a promising strategy \cite{Walker2011,Hoffmann2011,Hancock2011,Alphey2013,Hughes2013}, due to the fact that it drastically limits the vectorial competence of the infected mosquitoes \cite{Moreira2009}.
{\em Wolbachia} is a maternally transmitted endo-symbiont, widely present in arthropods in nature, but not in {\em Aedes aegypti}. It is characterized by {\em cytoplasmic incompatibility}, the fact that a {\em Wolbachia}-free female fertilized by a {\em Wolbachia}-infected male does not produce viable offsprings \cite{Werren2008}.
%Moreover {\em Wolbachia}-infected mosquitoes cease to transmit some arboviruses like dengue 
%Thus, artifial injection of {\em Wolbachia}-infected mosquitoes in the field has been investigated.
Mathematical models have been proposed to study the biological invasion of the {\em Wolbachia}-infected population \cite{Barton:2011aa,Chan2013,Fenton2011,Hughes2013,Bliman:2015aa}.

Spacial invasion of a population is commonly modeled by reaction-diffusion system of equations.
Barton and Turelli \cite{Barton:2011aa} have shown the ability of the following reaction-diffusion system to describe {\em Wolbachia} invasion:
denoting $p(t,x)\in [0,1]$ the proportion of infected mosquitoes at time $t\geq 0$ in the point $x\in\Rset^d$, and $\sigma$ the diffusivity, the system reads
\begin{subequations}
\label{eq0}
\begin{gather}
\partial_t p - \sigma \Delta p = f(p),\qquad (t,x)\in [0,+\infty)\times \Rset^d,\\
p(0,.) = p^0 \in L^\infty(\Rset^d;[0,1]).
\end{gather}
\end{subequations}
Notice that system \eqref{eq0} may be recovered through reduction of a more complex model describing the evolution of each population of mosquitoes, under the assumption of large population \cite{Strugarek:2016ab}.
The function $f$, characteristic of the interactions between the two populations, is given by 
\begin{equation}
\label{eq:f}
f(p)= \delta d s_h \frac{p(1-p)(p-\theta)}{s_h p^2 - (s_f+s_h) p +1}, \qquad
\theta = \frac{s_f +\delta -1}{\delta s_h}.
\end{equation}
All constants are positive, and have the following meaning: $d$ is the death rate of the uninfected population, $\delta d$ is the death rate of the infected population ($\delta>1$); $s_f\in [0,1)$ characterizes the fecundity decrease ($(1-s_f)$ is the ratio between the fecundity in the infected and non-infected populations); $s_h\in (0,1]$ characterizes the completeness of the cytoplasmic incompatibility (a fraction $s_h$ of uninfected females eggs fertilized by infected males will not hatch --- in case of perfect CI, $s_h=1$).
We assume $s_f+\delta -1 < \delta s_h$, in such a way that $\theta\in (0,1)$.
In such conditions, the function $f$ is {\em bistable}, in the following precise sense.

\begin{defi}[Bistable function]
A continuous function $f\ :\ [0,+\infty)\to\Rset$ is called bistable if there exists $\theta\in (0,1)$ such that $f$ is null on $0, \theta$ and $1$, negative on $(0,\theta)$ and positive on $(\theta,1)$.
%\hfill\IEEEQEDopen
\end{defi}

Several types of traps exist to capture mosquitos, permitting to evaluate their abundance through statistical methods \cite{Focks:2003aa,Silver:2007aa}.
On the other hand, polymerase chain reaction (PCR) method is used to screen for the presence of the bacterium {\em Wolbachia} in the captured sample \cite{Hoffmann2011}.
%This potentially makes available the value of the state.
%On the other hand, the fact that these measures are of a discrete in time nature has been ignored in this first work on the subject
One may therefore consider that  measurements of the state $p(t,x)$ are available during the treatment process, and that it is possible to consider {\em feedback} control strategies for scheduling and dimensioning of the releases (in order to validate such principle, we disregard here the discrete in time nature of the measurement).
As usual, the expected advantage of feedback compared to open-loop approaches (where the release
schedule is computed once for all a priori), is its ability to cope with parametric and dynamic uncertainties on the model.
We propose and study in this paper a class of distributed feedback laws that guarantee the success of the invasion.
%More precisely, we consider that the release of mosquitoes is adapted according to the dynamics of the population.
A major feature is that the control law we propose acts on a fixed {\em bounded} domain, denoted $\Omega$ in the sequel, during a {\em limited} time $T>0$. 
More precisely, denoting $u$ the proportion of infected mosquitoes, the controlled system satisfies the following reaction-diffusion system, obtained from \eqref{eq0} by adding to the reaction term a distributed control term $g$ with support in $\Omega$ and taking nonnegative values only:
\begin{subequations}
\label{equ0}
\begin{gather}
\partial_t u - \sigma \Delta u = f(u) + g(u) \mathbf{1}_{[0,T]},  \\
u(0,.) = u^0 \in  L^\infty(\Rset^d; [0,1]).
\end{gather}
\end{subequations}
The main contribution of the present paper is to prove that there exists systematic way to choose a time $T>0$, a bounded domain $\Omega$, and a distributed control law $g(u)$ null outside the bounded domain $\Omega$, such that, {\em for any initial value $u^0$}, the solution to the control problem \eqref{equ0} satisfies $u(t,x) \to 1$ when $t\to +\infty$, {\em for any $x$ in $\Rset^d$}.
Moreover, we propose explicit expressions for these objects, see below the precise statement of the main result, Theorem \ref{th2}.

The outline of the paper is the following.
Some well-known results on reaction-diffusion systems, useful for the study, are recalled in the next section.
The main result is stated and illustrated by numerical examples in Section \ref{sec:main}, and afterwards proved in Section \ref{se3}.
Concluding remarks and open questions are exposed in Section \ref{sec:fin}.
Last, an appendix provides the proof of a sufficient condition for invasion in bistable systems.

\section{Some recall on reaction-diffusion systems}

For the sake of clarity and completeness of the paper, we first recall in this section some useful results on bistable reaction-diffusion systems (see e.g. \cite{Fife1979}).

\subsection{Comparison principle in parabolic systems}
\label{se21}

\begin{defi}[Subsolutions and supersolutions]
Let $\Omega \subset \Rset^d$ be a regular, open set (bounded or not). Let $T>0$. Let $f : \Rset \to \Rset$ and $h : \partial \Omega \to \Rset$ be two smooth functions.
We consider an elliptic operator $\mathcal{L} := \Delta + k(x) \nabla$, where $k$ is a smooth function $\Omega \to \Rset^d$.
A subsolution to the parabolic problem 
\begin{equation}
\begin{array}{l}
\partial_t u - \mathcal{L} u = f(u) \text{ in } \Omega, \
u(t, \cdot) = h(t, \cdot) \text{ on } (0,T)\times\partial \Omega, \\[2mm]
u(0, \cdot) = u^0 (\cdot) \text{ in } \Omega.
\end{array}
  \label{eq:parabolic}
\end{equation}
is a function $\underline{u}$ such that
\begin{equation}
\begin{array}{l}
\partial_t \underline{u} - \mathcal{L} \underline{u} \leq f(\underline{u}) \text{ in } \Omega, \
\underline{u}(t, \cdot) \leq h(t, \cdot) \text{ on } (0,T)\times\partial \Omega, \\[2mm]
\underline{u}(0, \cdot) \leq u^0 (\cdot) \text{ in } \Omega.
\end{array}
\end{equation}
Similarly, a super-solution of %the parabolic problem
\eqref{eq:parabolic} is a function $\overline{u}$ such that
\begin{equation}
\begin{array}{l}
\partial_t \overline{u} - \mathcal{L} \overline{u} \geq f(\overline{u}) \text{ in } \Omega, \
\overline{u}(t, \cdot) \geq h(t, \cdot) \text{ on } (0,T)\times \partial \Omega, \\[2mm]
\overline{u}(0, \cdot) \geq u^0 (\cdot) \text{ in } \Omega.
\end{array}
\end{equation}
By definition, a solution is any function which is simultaneously a sub- and a super-solution.
%\hfill\IEEEQEDopen
\end{defi}

Sub- and supersolutions are used in the classical comparison principle:
\begin{prop}[Parabolic comparison principle]
For all $T > 0$ we introduce the ``parabolic boundary''
$$
\partial_T \Omega :=  [0, T) \times \partial \Omega\ \bigcup\ \{ 0 \} \times \Omega.
$$

If $\underline{u}$ (resp. $\overline{u}$) is a sub-solution (resp. a super-solution) to \eqref{eq:parabolic}, and $u$ is a solution such that $u \geq \overline{u}$ (resp. $u \leq \underline{u}$) on $\partial_T \Omega$, then this inequality holds on $\Omega \times [0, T]$.
%\hfill\IEEEQEDopen
 \label{prop:comparison}
\end{prop}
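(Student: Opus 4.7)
The plan is to deduce the comparison principle from the classical weak maximum principle applied to a linear parabolic operator, after ``freezing'' the nonlinearity by the mean value theorem. I will only carry out the case of a solution $u$ and a subsolution $\underline{u}$; the supersolution case is symmetric.

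First I would set $w := u - \underline{u}$. Subtracting the two (in)equalities gives
\begin{equation*}
\partial_t w - \mathcal{L} w \geq f(u) - f(\underline{u}) \quad \text{in } (0,T)\times\Omega,
\end{equation*}
together with $w \geq 0$ on the parabolic boundary $\partial_T \Omega$. Since $u$ and $\underline{u}$ take values in a bounded set and $f$ is smooth, the mean value theorem yields a bounded function $c(t,x)$, say with $|c(t,x)| \leq M$, such that $f(u) - f(\underline{u}) = c(t,x)\, w$. Thus $w$ satisfies a linear parabolic differential inequality
\begin{equation*}
\partial_t w - \mathcal{L} w - c(t,x)\, w \geq 0, \qquad w \geq 0 \text{ on } \partial_T\Omega.
\end{equation*}

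Next I would run the classical weak maximum principle on this linear inequality. To dispose of the zero-order term, set $v := e^{-\lambda t} w$ with $\lambda > M$; a direct computation gives
\begin{equation*}
\partial_t v - \mathcal{L} v + (\lambda - c(t,x))\, v \geq 0, \qquad \lambda - c > 0.
\end{equation*}
Now suppose for contradiction that $v$ attains a strictly negative infimum on $[0,T]\times \overline{\Omega}$. Since $v \geq 0$ on $\partial_T\Omega$, this infimum must be attained (or approached) at some interior point $(t_0,x_0)$ with $t_0 > 0$ and $x_0$ in the interior of $\Omega$. At such a minimum $\partial_t v \leq 0$, $\nabla v = 0$ and $\Delta v \geq 0$, hence $\partial_t v - \mathcal{L} v \leq 0$, while $(\lambda-c)v < 0$. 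Summing, the left-hand side is strictly negative, contradicting the differential inequality. Therefore $v \geq 0$, hence $w \geq 0$, on the whole cylinder.

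The main delicate point is the case of unbounded $\Omega$, where the minimum need not be attained. The standard remedy is to work within the a priori bound $\|w\|_\infty \leq 2\|u^0\|_\infty$, and to compare $v$ with an auxiliary barrier of the form $\varepsilon(e^{\mu t} + e^{\mu t}|x|^2)$ with $\mu$ large enough so that $\partial_t - \mathcal{L} - c$ applied to it is strictly positive; this barrier goes to $+\infty$ as $|x|\to\infty$, which forces any potential negative infimum of $v$ minus the barrier to be attained at a finite interior point, bringing us back to the previous contradiction. Letting $\varepsilon \to 0$ concludes. Apart from this mild technicality due to unboundedness, every step is routine.
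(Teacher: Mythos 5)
Your argument is correct: it is the standard proof of the weak comparison principle (linearize $f(u)-f(\underline{u})$ by the mean value theorem, absorb the zero-order term with the factor $e^{-\lambda t}$, run the minimum-principle contradiction, and use a quadratic barrier for unbounded $\Omega$). The paper itself gives no proof of Proposition \ref{prop:comparison} — it is recalled from the classical literature (e.g.\ Fife) — so there is no in-paper argument to compare against. Two minor points: the barrier step implicitly requires the drift $k$ to be bounded (or of at most linear growth) so that $k\cdot\nabla$ applied to $\varepsilon e^{\mu t}(1+|x|^2)$ is dominated by $\mu\,\varepsilon e^{\mu t}(1+|x|^2)$; and the a priori bound you invoke should simply be that $w=u-\underline{u}$ is bounded (e.g.\ by $\|u\|_\infty+\|\underline{u}\|_\infty$), not $2\|u^0\|_\infty$ — boundedness of $w$ is all that is needed for the barrier to dominate at spatial infinity. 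Neither point affects the validity of the proof.
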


\subsection{Traveling waves in bistable reaction-diffusion systems}
\label{se12}

Motivated by the previous example, we examine in this paper the question of onset of {\em traveling waves} in general system \eqref{eq0} with $f$ a bistable function.
Traveling waves are particular solutions of \eqref{eq0} of the type $p(t,x) := \tilde p(e\cdot x-ct)$ which connects the two stable steady states, i.e. $\tilde p(-\infty)=1,\ \tilde p(+\infty)=0$.
The normalized vector $e\in\Rset^d$ refers to the direction of propagation, and the quantity $c$ corresponds to the speed of the wave.
When $c>0$ the state $1$ (complete infestation by {\em Wolbachia} for the example developed in Section \ref{se11}) invades the states $0$, and vice versa.
Injecting the expression $p(t,x) = \tilde p(e\cdot x-ct)$ into \eqref{eq0}, we get
$0 = \partial_t p - \sigma \Delta p - f(p) = -c\tilde p' - \sigma\tilde p''-f(\tilde p)$.
Multiplying by $\tilde p'$ and integrating yields
\[
c\int_{-\infty}^{+\infty} (\tilde p'(z))^2\ dz = - \int_{-\infty}^{+\infty} f(\tilde p(z)) \tilde p'(z)\ dz = \int_0^1 f(z)\ dz,
\]
from which we deduce that the sign of $c$ is the same as the sign of $\int_0^1 f(z)\,dz$.
Then, in order to have evolution towards the equilibrium value $1$, it is {\em necessary} that
\begin{equation}
\label{eq2}
 \int_0^1 f(z)\ dz >0
\end{equation}
In consequence, we assume in all the paper that: %the following condition holds:
\begin{equation}
\label{eq1}
\exists \theta_c\in (\theta,1),\ F(\theta_c)=0\ \text{ with } F(z) := \int_0^z f(\xi)\ d\xi,\ z\in [0,1]
\end{equation}
This seems to be the case for the problem presented in Section \ref{se11}, see \cite{Barton:2011aa}.

The issue of onset of traveling waves in systems of type \eqref{eq0} with $f$ bistable fulfilling assumption \eqref{eq1} has been studied in \cite{Barton:2011aa,Strugarek:2016ab,Strugarek:2016aa}.
We now recall some key results.
%, first introducing the notion of {\em propagule}.

\begin{defi}[Propagule]
A {\em propagule} for equation \eqref{eq0} is any continuous initial function $p^0\ :\ \Rset^d\to [0,1]$ such that the corresponding solution $p$ of \eqref{eq0} verifies
\[
\forall x\in\Rset^d,\qquad \lim_{t\to +\infty} p(t,x) \text{ exists and equals } 1
\]
It is called {\em $\alpha$-propagule} if its supremum is equal to $\alpha$.
%\hfill\IEEEQEDopen
\end{defi}
Due to the comparison principle, {\em any initial condition bounded from below by a propagule is a propagule}.
Also, due to the homogeneity of the space in equation \eqref{eq0}, any translate of a propagule is a propagule.
To summarize, the set of propagules is an {\em upper set, invariant by translation}.

The following result answers the question of the existence of such objects.
It has been stated in \cite{Strugarek:2016aa}, as a consequence of \cite{Muratov:2017aa}. It relies on the existence of a threshold phenomena for the propagation in reaction-diffusion system as studied in \cite{Zlatos:2006aa,Du:2010aa,Polacik:2011aa,Muratov:2017aa}.
\begin{theo}[Existence of propagule \cite{Strugarek:2016aa}]
\label{th1}
Consider sys\-tem \eqref{eq0} with bistable function $f$ fulfilling \eqref{eq1}.
Then, for all $\alpha\in (\theta_c,1]$ there exists a compactly supported, non-increasing function $v_\alpha\ :\ \Rset^+\to\Rset^+$ with $v_\alpha(0)=\alpha$ such that, for any solution $p$ of \eqref{eq0} whose initial condition $p^0$ verifies:
\begin{equation}
\exists x_0\in\Rset^d,\ \forall x\in\Rset^d,\qquad
p^0(x) \geq v_\alpha(|x-x_0|),
\end{equation}
one has:
\begin{equation}
\lim_{t\to +\infty} p(t,x) = 1,
\end{equation}
for any $x\in\Rset^d$, locally uniformly.
Moreover, one can take the support of $v_\alpha$ in $[0,R_\alpha]$ with
\begin{equation}
\label{eq3}
R_\alpha := \left( \left(
1+ \frac{2F(\alpha)}{\sigma\alpha^2-2F(\theta)}
\right)^{1/d}-1
\right)^{-1} +1.
\end{equation}
%\hfill\IEEEQEDopen
\end{theo}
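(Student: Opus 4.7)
The plan is to build $v_\alpha$ as an explicit plateau-plus-linear-ramp profile, then combine an energy-dissipation argument with the threshold trichotomy for bistable reaction-diffusion equations. I set $v_\alpha(r):=\alpha$ for $r\in[0,R_\alpha-1]$, $v_\alpha(r):=\alpha(R_\alpha-r)$ for $r\in[R_\alpha-1,R_\alpha]$, and $v_\alpha(r):=0$ for $r\geq R_\alpha$. The profile is continuous, non-increasing, satisfies $v_\alpha(0)=\alpha$, and is supported in $[0,R_\alpha]$. By the parabolic comparison principle (Proposition \ref{prop:comparison}) and the translation invariance of \eqref{eq0}, it suffices to prove that the solution $p$ starting from the radial datum $x\mapsto v_\alpha(|x|)$ converges locally uniformly to $1$.

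I would then introduce the Ginzburg--Landau-type Lyapunov functional
\begin{equation*}
E[p]:=\int_{\Rset^d}\left[\frac{\sigma}{2}|\nabla p|^2-F(p)\right]dx,
\end{equation*}
which satisfies $\frac{d}{dt}E[p(t,\cdot)]=-\int_{\Rset^d}|\partial_t p|^2\,dx\leq 0$ along finite-energy solutions of \eqref{eq0}. Computing $E[v_\alpha(|\cdot|)]$ in spherical coordinates splits it into a plateau contribution $-c_d F(\alpha)(R_\alpha-1)^d$, with $c_d>0$ the volume of the unit ball of $\Rset^d$, and a ramp contribution bounded above by $c_d\bigl(\frac{\sigma\alpha^2}{2}-F(\theta)\bigr)\bigl(R_\alpha^d-(R_\alpha-1)^d\bigr)$, using $|\nabla v_\alpha(|\cdot|)|=\alpha$ on the ramp and the pointwise estimate $-F(s)\leq-F(\theta)$ on $[0,1]$ (valid because $\theta$ is the minimiser of $F$). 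Requiring these two terms to cancel yields the algebraic identity $(R_\alpha/(R_\alpha-1))^d-1=2F(\alpha)/(\sigma\alpha^2-2F(\theta))$, whose unique positive solution is exactly \eqref{eq3}. Thus $E[v_\alpha(|\cdot|)]\leq 0$, and strict negativity can be forced by an arbitrarily small enlargement of the plateau, which I would perform to keep a safety margin.

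Finally, I would invoke the threshold trichotomy for compactly supported, bounded initial data in bistable reaction-diffusion equations of \cite{Muratov:2017aa} (see also \cite{Zlatos:2006aa,Du:2010aa,Polacik:2011aa}): the solution converges locally uniformly either to $0$, to $1$, or to a radial unstable ``ground state''. Any such ground state has strictly positive Lyapunov energy, and convergence to $0$ would force $E[p(t,\cdot)]\to 0$ from above; both outcomes contradict the monotonicity $E[p(t,\cdot)]\leq E[v_\alpha(|\cdot|)]<0$. Only convergence to $1$ remains, yielding the conclusion. The main obstacle is this last step: rigorously classifying the possible $\omega$-limits relies on the ground-state analysis of \cite{Muratov:2017aa} and on parabolic regularity to justify the lower semicontinuity of $E$ along the flow; dealing with the borderline case $E[v_\alpha(|\cdot|)]=0$ is precisely what forces the small plateau enlargement mentioned above.
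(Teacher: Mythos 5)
Your proposal is correct and follows essentially the same route as the paper's appendix: the same energy functional $E[p]=\int(\frac{\sigma}{2}|\nabla p|^2-F(p))$, the same plateau-plus-linear-ramp initial datum (your $v_\alpha$ is the paper's $\phi_R$ with $R=R_\alpha-1$), the same bound $-F\leq -F(\theta)$ on the ramp leading exactly to \eqref{eq3}, and the same appeal to \cite{Muratov:2017aa} to convert $E<0$ into locally uniform convergence to $1$, with the comparison principle and translation invariance handling general initial data above $v_\alpha(|\cdot-x_0|)$. The only cosmetic difference is that you unpack the trichotomy behind Theorem 2 of \cite{Muratov:2017aa}, whereas the paper cites it directly.
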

Here and in the sequel, the ``locally uniformly convergence" means convergence in $L^\infty$ on any compact set of $\Rset^d$.
%For sake of completeness, a proof of Theorem \ref{th1} is given in Appendix.
Notice that $R_\alpha\to +\infty$ when $\alpha\to \theta_c$.
The estimate of $R_\alpha$ in \eqref{eq3} is not optimal.
As a matter of fact the issue of optimality of the support of propagules is still an open question.

%\subsection{Onset of traveling waves by feedback control}
%\label{se13}

\section{Igniting traveling waves by feedback control}\label{sec:main}
%\section{PRINCIPLE OF THE FEEDBACK-LAW AND MAIN RESULTS}

\subsection{Main result}

In order to ignite the propagation of the traveling wave, we propose to impose during a finite time $T>0$ a feedback-law in an open bounded region $\Omega$ of the space $\Rset^d$.
For simplicity, the feedback-law will be chosen in such a way that the resulting closed-loop system is linear on $\Omega$.
More precisely, we consider that the function $g$ in \eqref{equ0} reads
\begin{equation}\label{eq:g}
g(u) = (\mu (1-u) - f(u))_+\ \mathbf{1}_\Omega, \qquad \mu > 0.
\end{equation}
The notation $(\cdot)_+$ is for the positive part.
Notice that the positive part is taken to guarantee the nonnegativity of the control function $g$.
We have that $g(1)=0$, meaning that there is no action wherever the desired proportion $u=1$ is attained. 

Therefore, the controlled system under study is as follows
\begin{subequations}
\label{eq5}
\begin{equation}
\partial_t u - \sigma\Delta u = \mu (1-u)\ \text{ on } [0,T]\times (\Omega\cap\{g>0\} ),
\end{equation}
\vspace{-0.4cm}
\begin{equation}
\partial_t u - \sigma\Delta u = f(u)\ \text{ on } [0,T]\times \left(
(\Rset^d\setminus\overline\Omega)\ \bigcup\ \{g\leq 0\}
\right)
\bigcup\ (T,+\infty)\times\Rset^d,
\end{equation}
\vspace{-0.4cm}
\begin{equation}
u(0,\cdot )= u^0.
\end{equation}
\end{subequations}
The initial condition $u^0$ takes on values in $[0,1]$ and is typically zero in the problem of infestation by {\em Wolbachia} previously described, corresponding to the situation where initially no mosquito is infected.

The main result of this paper is the following.
\begin{theo}[Main convergence result]
\label{th2}
Let $f$ be a bistable function.
Then, for any $\mu>0$, there exist $T>0$ and a bounded open set $\Omega\subset \Rset^d$ such that all solutions to \eqref{eq5} converge to $1$ as $t$ goes to $+ \infty$, locally uniformly on $\Rset^d$.
Also, for any $\alpha,\overline{\alpha}$ such that $\theta_c < \alpha < \overline{\alpha} <1$ it is sufficient, in order to have convergence, to choose $T$ such that
\begin{equation}
\label{cond2}
T \geq \frac{1}{\mu} \ln\left(
\frac{\overline{\alpha}}{\overline{\alpha}-\alpha}
\right)
\end{equation}
and $\Omega$ containing a ball of radius $(1+\varepsilon^*(\alpha,\overline{\alpha}))R_\alpha$ where $R_\alpha$ is given in \eqref{eq3} and
\begin{equation}
\label{eq11}
\varepsilon^*(\alpha,\overline{\alpha}) :=
\frac{8}{\sqrt{(d-1)^2+\frac{32}{3}\frac{R_\alpha^2\mu(1-\overline{\alpha})}{\sigma\overline{\alpha}}} -d+1}.
\end{equation}
Last, the solutions of \eqref{eq5} are increasing with respect to $\mu,T$ and $\Omega$ (relatively to the order defined by the inclusion).
%\hfill\IEEEQEDopen
\end{theo}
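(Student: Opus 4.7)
The plan is to split the time axis into the control phase $[0, T]$ and the free-evolution phase $(T, +\infty)$. During the control phase, the feedback is used to raise $u$ on a sufficiently large ball inside $\Omega$ above the level $\alpha$; at time $T$ the profile will then dominate the propagule $v_\alpha$ provided by Theorem \ref{th1}, after which the bistable dynamics invade the whole space. The monotonicity of $u$ with respect to $\mu, T, \Omega$ is an immediate corollary of Proposition \ref{prop:comparison}: if $\mu_1 \leq \mu_2$, $T_1 \leq T_2$ and $\Omega_1 \subset \Omega_2$, then $g_1(u) \mathbf{1}_{[0, T_1]} \leq g_2(u) \mathbf{1}_{[0, T_2]}$ pointwise in $(t, x, u)$, so that the first solution is a sub-solution of the second equation with the same datum and comparison yields $u_1 \leq u_2$. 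Existence of a suitable pair $(T, \Omega)$ for any given $\mu > 0$ will then follow from the explicit sufficient conditions \eqref{cond2}--\eqref{eq11}, established for an arbitrary admissible pair $(\alpha, \overline\alpha)$ with $\theta_c < \alpha < \overline\alpha < 1$.

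\textbf{Reduction to a linear equation.} The key observation for the control phase is that on $[0, T] \times \Omega$ we have, pointwise, $f(u) + g(u) \geq \mu(1 - u)$: on $\{g > 0\}$ equality holds by construction of $g$, while on $\{g = 0\}$ the definition of the positive part yields $f(u) \geq \mu(1 - u)$. Hence $u$ is a super-solution on $[0, T] \times \Omega$ of the \emph{linear} equation $\partial_t w - \sigma \Delta w = \mu(1 - w)$. Moreover, $u(t, x) \geq 0$ on $[0, +\infty) \times \Rset^d$, by comparison with the sub-solution $w \equiv 0$ of the full system \eqref{eq5} (using $f(0) = 0$ and $g(0) \geq 0$).

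\textbf{Sub-solution on an inner ball.} Fix a ball $B = B(x_0, R')$ contained in $\Omega$ with $R' = (1 + \varepsilon^*(\alpha, \overline\alpha)) R_\alpha$, and look for a smooth radial sub-solution $\underline u$ of the linear equation on $[0, T] \times B$, vanishing on the parabolic boundary of $B$ and satisfying $\underline u(T, x) \geq \alpha$ for $|x - x_0| \leq R_\alpha$. The separated ansatz $\underline u(t, x) = \overline\alpha (1 - e^{-\mu t}) \psi(|x - x_0|)$, with a smooth non-increasing radial profile $\psi \leq 1$ such that $\psi(R') = 0$, reduces the sub-solution inequality, after taking the worst case in $t$, to the stationary condition $-\sigma \Delta \psi + \mu \psi \leq \mu / \overline\alpha$ on $B$, while the target at time $T$ becomes $\overline\alpha (1 - e^{-\mu T}) \psi(R_\alpha) \geq \alpha$. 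The time bound \eqref{cond2} then corresponds to requiring $\overline\alpha (1 - e^{-\mu T}) \geq \alpha$ with a small margin left for $\psi(R_\alpha) < 1$. The spatial bound \eqref{eq11} on $\varepsilon^*$ emerges from optimizing $\psi$: the appearance of $(d - 1)^2$ under the square root and of the constant $32/3$ strongly suggests a polynomial radial profile (built from powers of $1 - r^2/R'^2$) whose stationary inequality collapses, via a straightforward computation of the radial Laplacian in dimension $d$ and a pointwise worst-case analysis, into a scalar quadratic inequality for the ratio $R'/R_\alpha$ whose positive root is precisely $1 + \varepsilon^*$.

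\textbf{Comparison and invasion.} Proposition \ref{prop:comparison} applied on $[0, T] \times B$, with $u \geq 0 = \underline u$ on the parabolic boundary of $B$, yields $u \geq \underline u$ on $[0, T] \times B$; in particular $u(T, x) \geq \alpha$ for $|x - x_0| \leq R_\alpha$. Combined with $u(T, \cdot) \geq 0$ on $\Rset^d$ and the properties of $v_\alpha$ (support in $[0, R_\alpha]$, supremum equal to $\alpha$), this gives $u(T, \cdot) \geq v_\alpha(|\cdot - x_0|)$ pointwise on $\Rset^d$. For $t > T$, \eqref{eq5} reduces to \eqref{eq0}, and Theorem \ref{th1} applied with initial time $T$ and initial datum $u(T, \cdot)$ yields $u(t, x) \to 1$ locally uniformly on $\Rset^d$. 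The technical heart of the proof---and its main obstacle---is the sub-solution construction of the previous paragraph, namely the explicit choice of $\psi$ delivering the sharp expression \eqref{eq11}; once $\psi$ is designed and the stationary inequality verified, the remaining steps (comparison and the invocation of Theorem \ref{th1}) are routine.
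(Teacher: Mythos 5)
Your overall architecture matches the paper's proof exactly: use the feedback to reduce the control phase to the linear equation $\partial_t w-\sigma\Delta w=\mu(1-w)$ on a ball inside $\Omega$, build a separated sub-solution $\overline{\alpha}(1-e^{-\mu t})\psi(|x|)$ vanishing on the parabolic boundary, push $u(T,\cdot)$ above $\alpha$ on $B_{R_\alpha}$, dominate the propagule $v_\alpha$ of Theorem \ref{th1}, and conclude by comparison; the monotonicity argument is also the paper's. However, there is a genuine gap: the step you yourself identify as ``the technical heart of the proof and its main obstacle''--- the explicit choice of the radial profile $\psi$ and the verification of the stationary inequality that produces \eqref{eq11} --- is not carried out, only conjectured. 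Worse, the profile you sketch (powers of $1-r^2/R'^2$) cannot work with the stated constants: such a $\psi$ is strictly less than $1$ at $r=R_\alpha$, whereas \eqref{cond2} is exactly the condition $\overline{\alpha}(1-e^{-\mu T})\geq\alpha$ with \emph{no} margin to spare, so your target $\overline{\alpha}(1-e^{-\mu T})\psi(R_\alpha)\geq\alpha$ would fail for $T$ at the threshold. The profile must therefore be identically $1$ on $[0,R_\alpha]$ (so that the sub-solution equals $(1-e^{-\mu t})\overline{\alpha}$ there) and decrease to $0$ only across the annulus $[R_\alpha,(1+\varepsilon)R_\alpha]$.

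Concretely, the paper takes $\gamma(r)=\overline{\alpha}$ on $[0,R_\alpha]$ and $\gamma(r)=\overline{\alpha}\,\phi\bigl((r-R_\alpha)/(\varepsilon R_\alpha)\bigr)$ on the annulus, with the $C^1$-matching cubic $\phi(s)=-2(1-s)^3+3(1-s)^2$, for which $\sup|\phi'|=3/2$ and $\sup|\phi''|=6$. Since $\Delta\gamma=0$ on $B_{R_\alpha}$ and, on the annulus,
\begin{equation*}
|\sigma\Delta\gamma|\le\frac{\sigma\overline{\alpha}}{\varepsilon^2R_\alpha^2}\sup|\phi''|+\frac{\sigma\overline{\alpha}(d-1)}{\varepsilon R_\alpha\, r}\sup|\phi'|,
\end{equation*}
the sub-solution inequality reduces to $\frac{1}{\varepsilon^2}\sup|\phi''|+\frac{d-1}{\varepsilon}\sup|\phi'|\le R_\alpha^2\mu(1-\overline{\alpha})/(\sigma\overline{\alpha})$, a quadratic inequality in $1/\varepsilon$ whose admissible set is $\varepsilon\ge\varepsilon^*(\alpha,\overline{\alpha})$ with $\varepsilon^*$ precisely \eqref{eq11}. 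Without exhibiting such a profile and checking this inequality (including the $C^1$ regularity across $r=R_\alpha$ and $r=(1+\varepsilon)R_\alpha$ needed to apply the comparison principle to a merely piecewise-$C^2$ sub-solution), the quantitative half of the theorem --- the sufficiency of \eqref{cond2} and \eqref{eq11} --- is not established.
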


This result states that, given a feedback function $g$ as above (for fixed $\mu>0$), there exist a time of control $T$ and a domain $\Omega$ such that the proposed feedback control yields invasion of the {\em Wolbachia}-infected population.
The proof of this result relies on the construction of a subsolution to \eqref{eq5}, itself located above a propagule (whose existence is established by Theorem \ref{th1}). Then the comparison principle will yield the result. The construction of such a subsolution can be made explicit, leading to conditions \eqref{cond2} and \eqref{eq11}.
Notice that both these formulas involve a free parameter $\overline{\alpha}$.
The latter may be optimized to fit some requirement. For instance, if $T$ is required to be as small as possible, we may choose $\overline{\alpha}$ as big as possible, i.e.\ close to $1$. However, when $\overline{\alpha}$ is close to $1$, one sees from \eqref{eq11} that the domain $\Omega$ should contain a ball with a radius going to $+\infty$.

Instead of fixing the feedback gain $\mu$, as done in Theorem \ref{th2}, it is also possible to prescribe a maximal control time or a bounded release domain.
Corresponding reformulations are stated in the two following corollaries.

\begin{coro}[Prescribed maximal control time]
\label{co1}
Let $f$ be a bistable function, then for any $T>0$, there exist a domain $\Omega$ and $\mu>0$ such that all solutions to \eqref{eq5} converge to $1$ as $t$ goes to $+ \infty$, locally uniformly on $\Rset^d$.
%\hfill\IEEEQEDopen
\end{coro}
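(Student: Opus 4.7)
The plan is to derive the corollary as a direct reparametrization of Theorem \ref{th2}, trading the freedom to choose $\mu$ for the prescription of $T$. The key observation is that the sufficient condition \eqref{cond2} couples $T$ and $\mu$ through a single inequality: given either, one can solve for the other. Since the parameters $\alpha,\overline{\alpha}$ range freely in $(\theta_c,1)$, they will be fixed arbitrarily and then the remaining quantities $\mu$ and $\Omega$ will be adjusted to verify the hypotheses of Theorem \ref{th2}.

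Concretely, first I would pick any pair $(\alpha,\overline{\alpha})$ with $\theta_c<\alpha<\overline{\alpha}<1$ (for instance, fixed once and for all independently of $T$). Second, to ensure \eqref{cond2}, I would set
\[
\mu := \frac{1}{T}\ln\left(\frac{\overline{\alpha}}{\overline{\alpha}-\alpha}\right),
\]
so that equality is attained in \eqref{cond2} (any larger $\mu$ would also work). Third, with this choice of $\mu$, the radius $R_\alpha$ from \eqref{eq3} and the number $\varepsilon^*(\alpha,\overline{\alpha})$ from \eqref{eq11} are fully determined; I would then choose $\Omega$ to be any bounded open set containing a closed ball of radius $(1+\varepsilon^*(\alpha,\overline{\alpha}))R_\alpha$, e.g.\ an open ball of slightly larger radius centered at an arbitrary point of $\Rset^d$. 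Fourth, I would invoke Theorem \ref{th2} with these values of $(T,\mu,\Omega)$: all its sufficient conditions are then satisfied, yielding locally uniform convergence to $1$.

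There is no substantive obstacle in this argument; it is essentially a matter of bookkeeping. The only subtlety worth flagging is that $\varepsilon^*$ depends implicitly on $\mu$ through \eqref{eq11}, so $\mu$ must be fixed \emph{before} selecting $\Omega$, and not the other way around. Since increasing $\mu$ only increases the quantity under the square root in \eqref{eq11} and therefore decreases $\varepsilon^*$, the successive choices are well-defined and the resulting $\Omega$ is indeed bounded, as required.
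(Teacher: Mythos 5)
Your proposal is correct and follows essentially the same route as the paper's own proof: fix $\alpha,\overline{\alpha}$, solve the inequality \eqref{cond2} for $\mu$ in terms of the prescribed $T$, then take $\Omega$ containing a ball of radius $(1+\varepsilon^*(\alpha,\overline{\alpha}))R_\alpha$ and apply Theorem \ref{th2}. Your remark about fixing $\mu$ before $\Omega$ (because $\varepsilon^*$ depends on $\mu$) is a correct and slightly more careful bookkeeping point than what the paper writes.
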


\begin{proof}%[Proof of Corollary \ref{co1}]
Fix $\alpha$ and $\overline{\alpha}$ such that $\theta_c<\alpha<\overline{\alpha}<1$.
Take $\mu>0$ such that \eqref{cond2} is satisfied, i.e.\ $\mu \geq \frac{1}{T} \ln (\overline{\alpha}/(\alpha-\overline{\alpha}))$.
Then Theorem \ref{th2} applies and provides the desired result provided that $\Omega$ contains a ball of radius $(1+\varepsilon^*(\alpha,\overline{\alpha}))R_\alpha$, with $R_\alpha$ and $\varepsilon^*(\alpha,\overline{\alpha})$ given respectively in \eqref{eq3} and \eqref{eq11}.
\end{proof}

\begin{coro}[Prescribed bounded release domain]
\label{co2}
Let $f$ be a bistable function and a bounded domain $\Omega$ such that $B_{(1+\varepsilon)R_\alpha} \subset \Omega$ for some $\alpha\in(\theta_c,1)$ and $\varepsilon >0$, where $R_\alpha$ is the radius of the support of a propagule in Theorem \ref{th1} (for instance $R_\alpha$ may be as in \eqref{eq3}). 
Then, there exist $T>0$ and $\mu>0$ such that all solutions to \eqref{eq5} converge to $1$ as $t$ goes to $+ \infty$, locally uniformly on $\Rset^d$.
%\hfill\IEEEQEDopen
\end{coro}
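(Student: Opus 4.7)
The plan is to apply Theorem~\ref{th2} after a judicious choice of the free parameters $\overline{\alpha}$, $\mu$ and $T$. Since $\Omega$ is prescribed and satisfies $B_{(1+\varepsilon)R_\alpha}\subset \Omega$ with $\alpha\in(\theta_c,1)$ and $\varepsilon>0$ fixed, the strategy is to select $\mu$ large enough that the geometric radius condition of Theorem~\ref{th2} is automatically implied by the given inclusion, and then to pick $T$ from \eqref{cond2}.

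Concretely, I would first fix an arbitrary $\overline{\alpha}\in(\alpha,1)$, for instance $\overline{\alpha}:=(\alpha+1)/2$. I would next inspect the formula \eqref{eq11} for $\varepsilon^*(\alpha,\overline{\alpha})$: the quantity $(d-1)^2+\tfrac{32}{3}\tfrac{R_\alpha^2\mu(1-\overline{\alpha})}{\sigma\overline{\alpha}}$ is affine and strictly increasing in $\mu$, so the map $\mu\mapsto \varepsilon^*(\alpha,\overline{\alpha})$ is continuous, strictly decreasing, and tends to $0$ as $\mu\to+\infty$. I would therefore choose $\mu$ sufficiently large to guarantee $\varepsilon^*(\alpha,\overline{\alpha})\leq \varepsilon$. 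With this choice, $(1+\varepsilon^*(\alpha,\overline{\alpha}))R_\alpha\leq (1+\varepsilon)R_\alpha$, so the ball required by Theorem~\ref{th2} is contained in $\Omega$ by hypothesis. Finally, I would set $T:=\tfrac{1}{\mu}\ln\bigl(\overline{\alpha}/(\overline{\alpha}-\alpha)\bigr)$, which fulfils \eqref{cond2} with equality.

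With $\mu$, $T$ and the given $\Omega$ so chosen, all the hypotheses of Theorem~\ref{th2} are satisfied, and the locally uniform convergence of the solutions of \eqref{eq5} to $1$ as $t\to+\infty$ follows immediately. No substantive obstacle is expected: the argument is essentially a reparameterization of Theorem~\ref{th2}, with $\Omega$ playing the role of the given data and $\mu$ (rather than $\Omega$) being tuned to meet the radius requirement. The only verification is the monotonicity and limit of $\varepsilon^*(\alpha,\overline{\alpha})$ as a function of $\mu$, which is immediate from \eqref{eq11}.
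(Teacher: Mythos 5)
Your proof is correct and follows essentially the same route as the paper: fix $\overline{\alpha}\in(\alpha,1)$, take $\mu$ large enough that $\varepsilon^*(\alpha,\overline{\alpha})\leq\varepsilon$ so the prescribed ball satisfies the radius requirement of Theorem~\ref{th2}, and then take $T$ from \eqref{cond2}. The paper merely states the resulting explicit lower bound on $\mu$ obtained by inverting \eqref{cond1}, whereas you argue via the monotonicity and limit of $\varepsilon^*$ in $\mu$; the two are equivalent.
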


\begin{proof}%[Proof of Corollary \ref{co2}]
Choose $\mu$ such that for any $\overline{\alpha}\in (\alpha,1)$, we have 
$$
\mu \geq \frac{\sigma \overline{\alpha}}{(1-\overline{\alpha})R_\alpha^2} \left(\frac{2}{\varepsilon^2} + \frac{1}{2\varepsilon} (d-1)\right).
$$
With this value, Theorem \ref{th2} applies with $T$ as in \eqref{cond2}.
\end{proof}

Last, notice that the speed of convergence towards the fully infested state depends upon the speed $c$ of the wave and the traveling wave profile, see e.g.\ \cite[Theorem 1]{Muratov:2017aa}.

\subsection{Numerical illustration}

In order to illustrate the main result, we present a numerical example in one spacial dimension.
The numerical values, taken from \cite{Barton:2011aa}, are chosen as $s_f=0.1$, $s_h=0.3$, $\delta=1$, then $\theta = \frac{s_f}{s_h}$ in the expression of $f$ in \eqref{eq:f}.
System \eqref{eq5} is solved by discretization with an implicit finite difference scheme on the computational domain $[-20,20]$.

The numerical results are presented in the Figures below. 
We display the time dynamics of the proportion of infected population in Fig.\ \ref{fig}, \ref{fig2}, \ref{fig3}, \ref{fig5}.
In all simulations, the control time is fixed to $T=10$.
For the sake of comparison, feedbacks with different parameters have been tested. 
In Fig.\ \ref{fig}, the control gain is $\mu=0.5$ and the domain control $\Omega=[-1,1]$.
We observe that invasion occurs, showing that this control allows to pass from the steady state $0$ to the steady state $1$.
In Fig.\ \ref{fig1}  is displayed a zoom on $[-2,2]\times[0,10]$ of the time dynamics of the control input.
As expected this function is compactly supported in $\Omega$, and close to $0$ near the final control time $T$.
In Fig.\ \ref{fig2}, the control domain is changed to $\Omega=[-0.5,0.5]$. In Fig.\ \ref{fig3},
$\mu$ is changed to $\mu=0.15$.
In both case, the control is not sufficient to guarantee invasion.
Finally, in Fig.\ \ref{fig5}, $\mu=0.15$ and $\Omega=[-2,2]$ and the spread of the infected population is observed.
%The essays clearly attest to the existence of a threshold between extinction and invasion \cite{Zlatos:2006aa,Du:2010aa,Polacik:2011aa,Muratov:2013aa,Muratov:2017aa}.
The essays clearly attest to the fact that a threshold exists, below which the infection by {\em Wolbachia} gets extinguished, and above which it invades the population \cite{Zlatos:2006aa,Du:2010aa,Polacik:2011aa,Muratov:2017aa}.
Also, it is apparent that the wave progresses with linear speed, independent from the control parameters (compare Fig.\ \ref{fig} and \ref{fig5}).
Last, it can also be noticed that the solution is a nondecreasing function of $\mu, T$ and $\Omega$.

\begin{figure}
\centering\includegraphics[width=\linewidth]{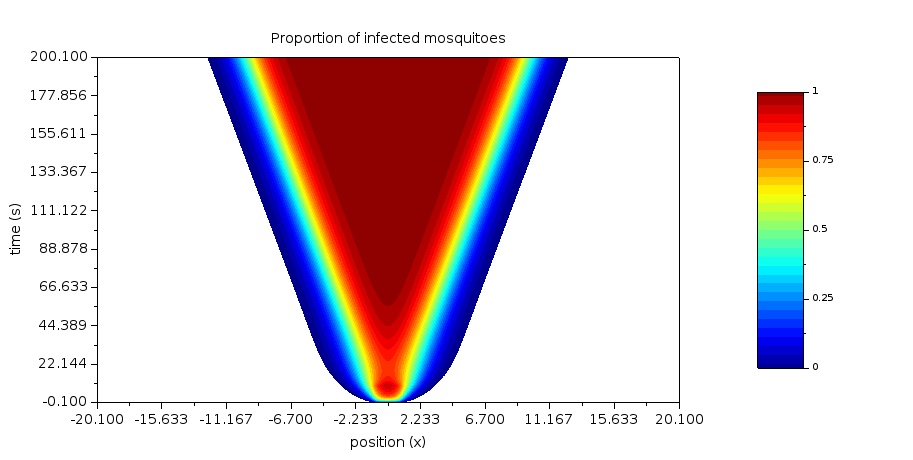}
%\centering\includegraphics[scale=0.25]{control2D1.eps}
\caption{Evolution of the proportion of infected (in $x$-axis) as a function of time ($y$-axis) for system \eqref{eq5}.
Parameters are $T=10$, $\mu=0.5$, $\Omega=[-1,1]$.}
\label{fig}
\end{figure}

\begin{figure}
%\centering\includegraphics[scale=0.25]{controlfunction2D1_zoom.eps}
\centering\includegraphics[width=\linewidth]{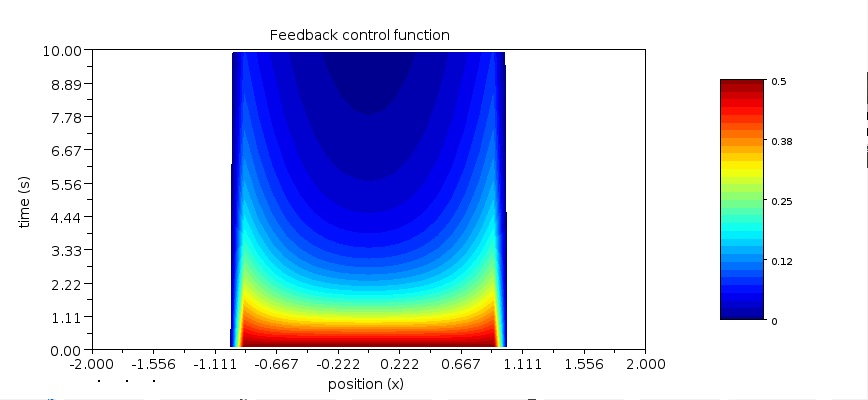}
\caption{Evolution of the control input ($x$-axis) as a function of time ($y$-axis). Same parameter set than in Fig.\ \ref{fig}.}
\label{fig1}
\end{figure}

\begin{figure}
%\centering\includegraphics[scale=0.25]{control2D2_L05.eps}
\centering\includegraphics[width=\linewidth]{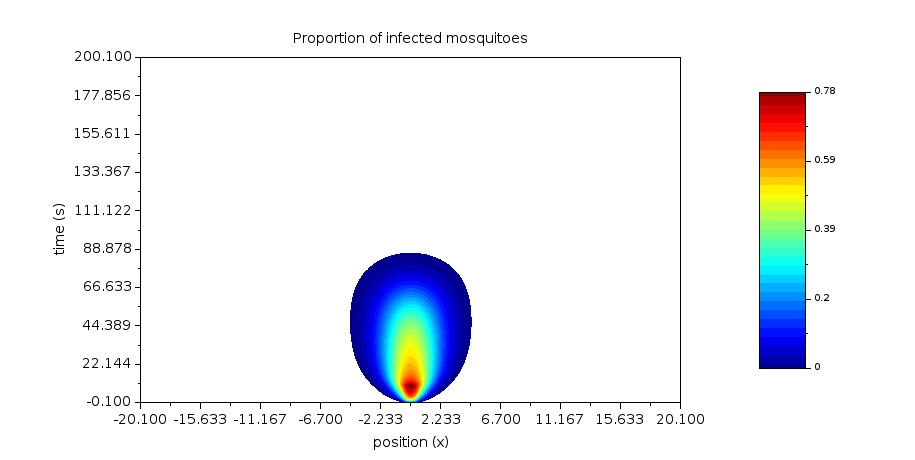}
\caption{Similar to Fig.\ \ref{fig}, with $T=10$, $\mu=0.5$, $\Omega=[-0.5,0.5]$.}
%Time dynamics of the proportion of infected population with a feedback control function as in system \eqref{eq5}. {\color{blue} The control time is fixed to $T=10$, $\mu=0.5$, $\Omega=[-0.5,0.5]$}}
\label{fig2}
\end{figure}

\begin{figure}
%\centering\includegraphics[scale=0.25]{control2D2_mu015.eps}
\centering\includegraphics[width=\linewidth]{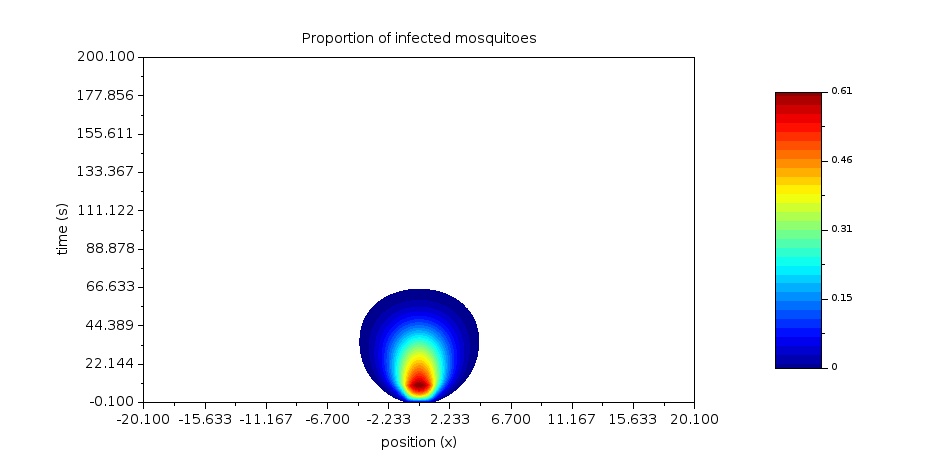}
\caption{Similar to Fig.\ \ref{fig}, with $T=10$, $\mu=0.15$, $\Omega=[-1,1]$.}
\label{fig3}
\end{figure}

% \begin{figure}
% \centering\includegraphics[width=\linewidth]{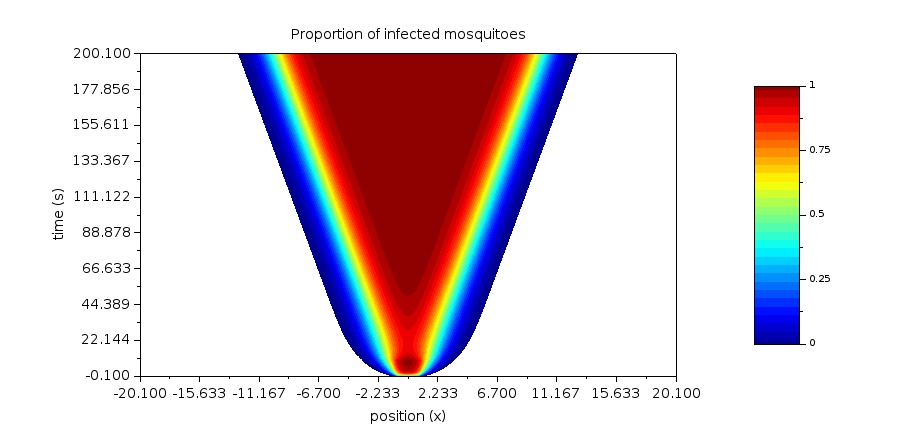}
% \caption{Time dynamics of the proportion of infected population with a feedback control function as in system \eqref{eq5}. {\color{blue} The control time is fixed to $T=10$, we take $\mu=1$ and $\Omega=[-1,1]$}}
% \label{fig4}
% \end{figure}

\begin{figure}
%\centering\includegraphics[scale=0.25]{control2D2_mu015L2.eps}
\centering\includegraphics[width=\linewidth]{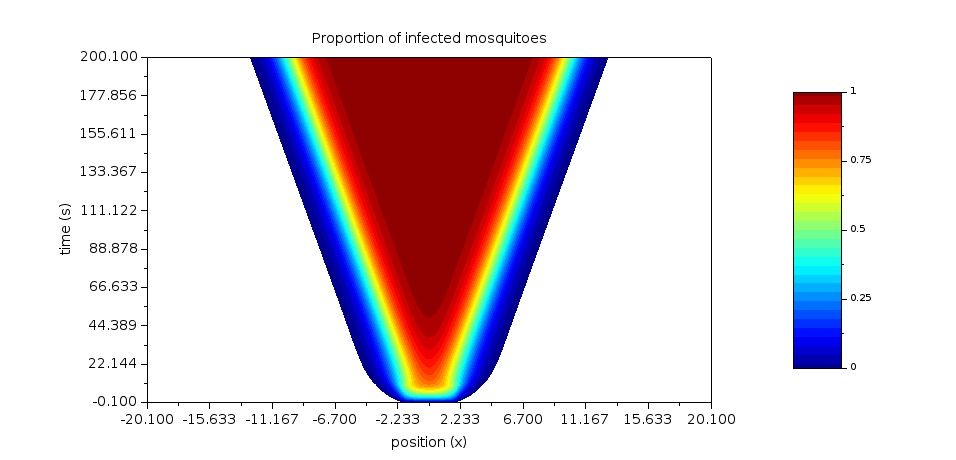}
\caption{Similar to Fig.\ \ref{fig}, with $T=10$, $\mu=0.15$, $\Omega=[-2,2]$.}
%\caption{Time dynamics of the proportion of infected population with a feedback control function as in system \eqref{eq5}. {\color{blue} The control time is fixed to $T=10$, we take $\mu=0.15$ and $\Omega=[-2,2]$}}
\label{fig5}
\end{figure}

% \begin{figure}
% \includegraphics[width=0.49\linewidth]{control1000.jpg}
% \includegraphics[width=0.49\linewidth]{control1002.jpg}\\
% \includegraphics[width=0.49\linewidth]{control1010.jpg}
% \includegraphics[width=0.49\linewidth]{control1020.jpg}\\
% \includegraphics[width=0.49\linewidth]{control1030.jpg}
% \includegraphics[width=0.49\linewidth]{control1040.jpg}
% \caption{Time dynamics of the proportion of infected population with a feedback control function as in system \eqref{eq5}. From top left to bottom right, the plotted times are : initial time, t=10, t=50, t=100, t=150, t=200.}
% \label{fig}
% \end{figure}

\section{Proof of Theorem \ref{th2}}
\label{se3}
%\mbox{}
%\subsection{Proof of Theorem \ref{th2}}
%\subsection{Proof of the convergence}
%\label{se31}

\noindent $\bullet$
We first prove the convergence result.
The proof is based on the construction of several auxiliary functions, permitting comparison with a solution $u$ of \eqref{eq5}.
We will more precisely proceed as follows.
Choose $\alpha\in (\theta_c,1)$.
Due to Theorem \ref{th1}, there exists an $\alpha$-propagule $v_\alpha$ with support contained in $[0,R_\alpha]$ centered in zero with radius $R_\alpha$.
We will show successively in the sequel that it is possible to find $T>0$, $\varepsilon>0$ and two nonnegative functions $\uu$ and $\uuu$ with support in $B_{(1+\varepsilon)R_\alpha}$ (all these objects depending upon $\alpha$) such that
\begin{enumerate}
\item
\label{pt1}
for any $t\in [0,T]$ and any $x\in\Rset^d$, $\uu(t,x) \leq u(t,x)$;
\item
\label{pt2}
for any $t \in [0,T]$ and any $x\in\Rset^d$, $\uuu(t,x) \leq \uu(t,x)$;
\item
\label{pt3}
for any $x\in B_{R_\alpha}$, $\displaystyle \uuu(T,x) \geq \alpha = \max_{x\in B_{R_\alpha}} v_\alpha(|x|)$.
%for any $t\geq T$ and any $x\in B_{R_\alpha}$, $\displaystyle \max_{x\in B_{R_\alpha}} v_\alpha(|x|) = \alpha \leq \uuu(t,x)$.
\end{enumerate}
These three properties imply that, for any $x\in B_{R_\alpha}$,
\begin{equation}
\label{eq7}
v_\alpha(|x|) \leq \uuu(T,x) \leq \uu(T,x) \leq u(T,x)
\end{equation}
As $\supp\ v_\alpha \subset [0,R_\alpha]$, one concludes that indeed $v_\alpha(|x|) \leq u(T,x)$ in the whole space $\Rset^d$.
Using the fact that $v_\alpha$ is a propagule, this demonstrates the convergence result in Theorem \ref{th2}, by applying Theorem \ref{th1}.
Therefore, it now only remains to prove the three points above.

\paragraph{Proof of point \ref{pt1}.}

For any $T>0$ and any $\varepsilon>0$, one may introduce the function $\underline{u}$, solution to the problem
\begin{subequations}
\label{eq6}
\begin{gather}
\partial_t \underline{u} - \sigma\Delta \underline{u} = \mu (1-\underline{u}), \qquad \text{ on } [0,T] \times B_{(1+\varepsilon)R_\alpha}, \\
\underline{u} = 0, \qquad \text{ on } [0,T] \times \partial B_{(1+\varepsilon)R_\alpha},  \\
\underline{u}(0,\cdot) \equiv 0, \qquad\text{ on } B_{(1+\varepsilon)R_\alpha}.
\end{gather}
\end{subequations}
The function $\uu$ thus defined is a subsolution for \eqref{eq5} on $[0,T]$ provided that $B_{(1+\varepsilon)R_\alpha} \subset \Omega$. Indeed, we have $u\geq 0$ on $\partial B_{(1+\varepsilon)R_\alpha}$, and on the set $\{g\leq 0\}$ we have $f(u)\geq \mu(1-u)$.
Due to the comparison principle, we deduce that $\uu\leq u$ on $B_{(1+\varepsilon)R_\alpha}$. Then, we extend $\uu$ by the constant $0$ on $\Rset^d\setminus B_{(1+\varepsilon)R_\alpha}$ and the point \ref{pt1}.\ is proved.

\paragraph{Proof of point \ref{pt2}.}
We will now construct the function $\uuu$.
Let us first introduce a function $\phi\in C^2([0,1])$ such that $\phi(0)=1$, $\phi(1)=0$, $\phi'(0)=\phi'(1)=0$ and $\phi'\leq 0$ on $[0,1]$.
Such function exists, take for instance the polynomial
\begin{equation}
\label{eq8}
\phi(x)=-2(1-x)^3+3(1-x)^2.
\end{equation}

Let now $\overline{\alpha} \in (\alpha,1)$ and introduce the radially symmetric nonincreasing function 
$$
\gamma(r) := \overline{\alpha}\ \mathbf{1}_{r\leq R_\alpha} + \overline{\alpha} \phi\left(
\frac{r-R_\alpha}{\varepsilon R_\alpha}
\right) \mathbf{1}_{R_\alpha<r\leq (1+\varepsilon) R_\alpha}.
$$
Clearly, $\gamma$ is non increasing on $[0,+\infty)$.
Moreover, its value is $\overline{\alpha}$ on $[0, R_\alpha]$ and its support is equal to $[0,(1+\varepsilon)R_\alpha]$.
In particular, for any nonnegative $r$, $0 \leq \gamma(r) \leq \overline{\alpha}$.

By definition $\gamma\in C^1(0,+\infty)$ and, except possibly for $r=R_\alpha$ and $(1+\varepsilon)R_\alpha$, we may compute its Laplacian.
For any $r\in (R_\alpha,(1+\varepsilon)R_\alpha)$, the latter is equal to  
\begin{eqnarray*}
-\Delta \gamma
& = &
-\partial_{rr} \gamma - \frac{d-1}{r} \partial_r \gamma(r)\\
& = &
-\frac{\overline{\alpha}}{\varepsilon^2 R_\alpha^2} \phi''\left(
\frac{r-R_\alpha}{\varepsilon R_\alpha}
\right) 
- \frac{\overline{\alpha}(d-1)}{\varepsilon R_\alpha r}\phi'\left(
\frac{r-R_\alpha}{\varepsilon R_\alpha}
\right),
\end{eqnarray*}
and it is equal to zero on $(0,R_\alpha)\cup ((1+\varepsilon)R_\alpha,+\infty)$.

Pick now $\varepsilon>0$ such that
\begin{equation}
\label{cond1}
\frac{1}{\varepsilon^2} \sup_{(0,1)}{|\phi''|}
+ \frac{d-1}{\varepsilon} \sup_{(0,1)}{|\phi'|} \leq \frac{R_\alpha^2\mu (1-\overline{\alpha})}{\sigma\overline{\alpha}}.
\end{equation}
This is possible, since $\phi\in C^2(0,1)$ and $\overline{\alpha}<1$.
With such a choice of $\varepsilon$, one has, for all $r\in(R_\alpha,(1+\varepsilon)R_\alpha)$,
\begin{equation}
\label{deltagamma}
|\sigma \Delta \gamma(r)|
\leq
\frac{\sigma\overline{\alpha}}{\varepsilon^2R_\alpha^2} \sup_{(0,1)}{|\phi''|}
+ \frac{\sigma\overline{\alpha}(d-1)}{\varepsilon R_\alpha r} \sup_{(0,1)}{|\phi'|}
\leq \mu (1-\overline{\alpha})
\leq \mu (1-\gamma(r)).
\end{equation}
%As a matter of fact, this condition is satisfied for all $r\in(R_\alpha,(1+\varepsilon)R_\alpha)$ provided that
The last inequality is deduced from the fact that $0\leq \gamma(r)\leq \overline{\alpha}$ everywhere.
Notice that since $\gamma$ is constant on $B_{R_\alpha}$, we have $\Delta \gamma=0$ and inequality \eqref{deltagamma} also holds true on $B_{R_\alpha}$.

We define now $\uuu$, as
\begin{equation}\label{def:uuu}
\uuu (t,x) := (1-e^{-\mu t}) \gamma(|x|).
\end{equation}
We compute
\begin{eqnarray}
\partial_t \uuu  - \sigma \Delta \uuu
& = &
\nonumber
\mu e^{-\mu t} \gamma - (1-e^{-\mu t}) \sigma \Delta \gamma\\
& \leq &
\nonumber
\mu e^{-\mu t} \gamma + \mu (1-e^{-\mu t}) (1-\gamma) \\
& = &
\label{eq9}
\mu(1-\uuu - e^{-\mu t} (1-\gamma)
\leq \mu (1-\uuu ).
\end{eqnarray}
Formula \eqref{deltagamma} was used to deduce the first inequality, and the fact that $\gamma\leq \overline\alpha \leq 1$ to deduce the second one.
Moreover, by definition of $\gamma$, we have $\uuu (0,.)\equiv 0$, and $\uuu (\cdot,x)=0$ for any $x\in\partial B_{(1+\varepsilon)R_\alpha}$.
Then $\uuu $ is a subsolution for \eqref{eq6}.
Applying the comparison principle, we deduce point \ref{pt2}.

\paragraph{Proof of point \ref{pt3}.}

Now, notice that from point \ref{pt2} and from the definition of $\uuu$ in \eqref{def:uuu}, we have
\begin{equation}
\label{eq10}
\forall x \in B_{R_\alpha},\ \forall t\in [0,T],\quad
\uuu(t,x) = (1-e^{-\mu t}) \overline{\alpha} \leq \underline{u}(t,x).
\end{equation}
Choose $T$ such that \eqref{cond2} is fulfilled.
For such a choice, one deduces from \eqref{eq10} that $\uuu(T,x)\geq\alpha$ for any $x\in B_{R_\alpha}$.
This proves the point \ref{pt3}.\ and concludes the proof of the (locally uniform) convergence towards 1 contained in Theorem \ref{th2}.\\

%\subsection{Proof of the estimates}
%\mbox{}

\noindent $\bullet$
We now demonstrate the estimates contained in the statement of Theorem \ref{th2}.
The estimate on $T$ comes from \eqref{cond2}, see above.
On the other hand, if $\phi$ in the beginning of the present proof is taken as in \eqref{eq8}, then $\sup_{[0,1]} |\phi'|=\frac 32$ and $\sup_{[0,1]} |\phi''|=6$, and condition \eqref{cond1} reads
$$
\frac{6}{\varepsilon^2} + \frac{2(d-1)}{3\varepsilon} \leq \frac{R_\alpha^2\mu (1-\overline{\alpha})}{\sigma\overline{\alpha}}
$$
that is $\varepsilon \geq \varepsilon^*(\alpha,\overline{\alpha})$ defined in \eqref{eq11}.

\noindent $\bullet$
The monotonicity of the solution with respect to
%with respect to the feedback parameters
$\mu, T, \Omega$ is a direct consequence of the comparison principle in Section \ref{se21}.
This finally achieves the proof of Theorem \ref{th2}.
%\hfill\IEEEQEDopen

\section{Conclusion and open questions}\label{sec:fin}

In this paper, we have studied the use of feedback control in a release protocol, in order to guarantee invasion of a  host population in bistable reaction-diffusion models. Our application example concerns the invasion of the maternally transmitted bacteria {\em Wolbachia} in populations of mosquitoes. The use of the latter is motivated by its blocking action on the transmission of some arboviruses like dengue.
We exhibit a class of feedback control functions which, when applied on a bounded domain $\Omega$ during finite time $T$, allows to pass from a {\em Wolbachia}-free population to fully {\em Wolbachia}-infected population as time goes to $+\infty$.

Several perspectives may be investigated in the future. First, as mentionned above, optimizing the release protocol is an important issue. Indeed, the conditions given in Theorem \ref{th2} are only sufficient, and may be improved depending on the constraints to be satisfied. For instance, one may be interested in minimizing the global number of mosquitoes introduced, or the treatment duration, or again the size of the release domain. However, the propagules functions introduced in Theorem \ref{th1} are not optimal. The construction of optimal functions igniting the propagation is still an open question.

Secondly, the mathematical model used in this study, dealing with the proportion of {\em Wolbachia}-infected mosquitoes, is a simplified version of a more elaborated model for two species ({\em Wolbachia}-infected and {\em Wolbachia}-free mosquitoes) \cite{Strugarek:2016ab}. Models including more biological features may also be encountered, for instance considering the different stages in the life of mosquitoes (larvae, eggs, pupae, adults), see \cite{Bliman:2015aa} and references therein. An interesting extension of the present work may be the study of a control on such more elaborated models.

Finally, we underline the fact that the environment is assumed homogeneous in the present work. Heterogeneity in the environment may have crucial consequences in the spread of population. In fact, stable fronts or blockings have been observed  \cite{Yeap2011,Hoffmann2014,Strugarek2017}. Our study does not take into account these phenomena and the use of a feedback control function to allow the crossing of potential barriers is a  direction of research that will be investigated.

\appendix

\section*{Appendix -- Proof of Theorem \ref{th1} \cite{Strugarek:2016aa}}

The approach is based on the energy method proposed in \cite{Muratov:2017aa}.
For sufficiently smooth function $p(t,x)$, define the energy as:
%\begin{equation}
%\label{Energy}
$E[p](t) = \int_{\Rset^d} \big( \frac{\sigma}{2} \lvert \nabla p(t,x) \rvert^2 - F(p(t,x))\big) dx$.
%\end{equation}
For any solution $p$ of \eqref{eq0},
%\begin{equation}
$\frac{d}{dt} E[p](t) = - \int_{\Rset^d} \big( \sigma \Delta p(t,x) + f(p(t,x))\big)^2 \,dx\leq 0$,
%\end{equation}
thus $E[p](t)\leq E[p^0]$ for any $t\geq 0$, where $p^0$ are the initial data for $p$.
Moreover, \cite[Theorem 2]{Muratov:2017aa} states that $p(t,\cdot)\to 1$ locally uniformly in $\Rset^d$ as $t\to +\infty$, provided that $\displaystyle\lim_{t\to +\infty} E[p(t,\cdot)] <0$.
Since $t\mapsto E[p(t,\cdot)]$ is nonincreasing, it suffices to construct $p^0$ s.t.\ $E[p^0]<0$ to prove Theorem \ref{th1}.

Let $\alpha > \theta_c$, and consider the family of initial data 
radially symmetric non-increasing along the rays and compactly supported in $B_{R+1}$, $R>0$,
defined by
$\phi_R(|x|)= \alpha$ if $0\leq |x| \leq R$ and $\phi_R(|x|)=\alpha(R+1-|x|)$ if $R \leq |x| \leq R+1$.
%We may compute
Then
$$
E[\phi_R]=|S_{d-1}| \int_0^\infty \left(
\frac{\sigma}{2} |\phi'_R(r)|^2 - F(\phi_R(r))
\right) r^{d-1} dr,
$$
$|S_{d-1}|$ being the volume of the unit sphere in $\Rset^d$.
By definition of $\phi_R$,
$E[\phi_R]/|S_{d-1}|
=
- \int_0^R F(\alpha) r^{d-1} dr
+ \int_R^{R+1} \left(
\frac{\sigma \alpha^2}{2} - F(\phi_R)
\right) r^{d-1}dr$.
As $f$ is bistable, $F(\theta)$, the minimal value of $F$ on $[0,1]$, is negative.
Therefore,
$d E[\phi_R] / R^d |S_{d-1}|
<
- F(\alpha)
+ \left(
\frac{\sigma\alpha^2}{2}-F(\theta)
\right)
\left(
\left(
1+\frac{1}{R}
\right)^d-1
\right)$.
Now $F(\alpha)>0$, since $\alpha>\theta_c$. 
We deduce that if $R+1\geq R_\alpha$ (such that supp$(\phi_R) \subset B_{R_\alpha}$),
where $R_\alpha$ is defined in \eqref{eq3}, then $E[\phi_R]<0$.
This achieves the proof of Theorem \ref{th1}.
%\hfill\IEEEQEDopen

%\IEEEtriggeratref{11}
\bibliographystyle{plain}
\bibliography{active}

\end{document}